\DeclareMathOperator{\RE}{Re}
\DeclareMathOperator{\IM}{Im}
\DeclareMathOperator{\Psl}{PSL} 
 \DeclareMathOperator{\Ad}{Ad}
 \DeclareMathOperator{\sign}{sign}
\DeclareMathOperator{\TRR}{Tr} \DeclareMathOperator{\MOD}{mod}
\DeclareMathOperator{\vol}{vol} 
\DeclareMathOperator{\II}{i}
\newenvironment{teorem}[2][Theorem]{\begin{trivlist}
\item[\hskip \labelsep {\bfseries #1}\hskip \labelsep {\bfseries #2}]}{\end{trivlist}}
\newenvironment{lem}[2][Lemma]{\begin{trivlist}
\item[\hskip \labelsep {\bfseries #1}\hskip \labelsep {\bfseries #2}]}{\end{trivlist}}
\newenvironment{cor}[2][Corollary]{\begin{trivlist}
\item[\hskip \labelsep {\bfseries #1}\hskip \labelsep {\bfseries #2}]}{\end{trivlist}}
\newenvironment{remm}[2][Remark]{\begin{trivlist}
\item[\hskip \labelsep {\bfseries #1}\hskip \labelsep {\bfseries #2}]}{\end{trivlist}}
\numberwithin{equation}{section}
\numberwithin{theorem}{section}
\newenvironment{rem}[2][Acknowledgement.]{\begin{trivlist}
\item[\hskip \labelsep {\bfseries #1}\hskip \labelsep {\bfseries #2}]}{\end{trivlist}}
\newcommand{\set}[1]{\left\{#1\right\}}
\newcommand{\abs}[1]{\left\vert#1\right\vert}
\newcommand{\br}[1]{\left(#1\right)}
\newcommand{\SqBr}[1]{\left[#1\right]}
\begin{document}

\title[Distribution of singularities]{Distribution of the zeta functions singularities for compact even-dimensional locally symmetric spaces}
\author{Muharem Avdispahi\'c and D\v zenan Gu\v si\'c}

\address{University of Sarajevo, Department of Mathematics, Zmaja od Bosne
35, 71000 Sarajevo, Bosnia and Herzegovina}
\email{\textbf{mavdispa@pmf.unsa.ba}}

\address{University of Sarajevo, Department of Mathematics, Zmaja od Bosne
35, 71000 Sarajevo, Bosnia and Herzegovina}
\email{\textbf{dzenang@pmf.unsa.ba}}

\keywords{Selberg zeta function, Ruelle zeta function, locally
symmetric spaces}

\subjclass[2010]{11M36, 30D15, 37C30}

\maketitle

\begin{abstract}
For compact, even-dimensional, locally symmetric spaces, we obtain precise estimates on the number of singularities of Selberg's and Ruelle's zeta functions considered by U. Bunke and M. Olbrich.
\end{abstract}

\section{Introduction}

Let $Y=\Gamma\backslash G/K=\Gamma\backslash X$ be a compact, $n-$ dimensional ($n$ even), locally
symmetric Riemannian manifold with negative sectional curvature, where $G$ is
a connected semisimple Lie group of real rank one, $K$ is a
maximal compact subgroup of $G$ and $\Gamma$ is a discrete
co-compact torsion free subgroup of $G$.

We require $G$ to be linear in order to have complexification
available.

In \cite{Bunke}, Bunke and Olbrich studied the zeta functions of
Selberg and Ruelle associated with a locally homogeneous vector
bundle of the unit sphere bundle of $Y$.

It is well known that the number of the zeros $\rho$, $\abs{\rho}\leq t$ of an entire function of order not larger than $m$ is $O\br{t^m}$
(see, \cite[p.~510]{Fried}). The classical Selberg zeta function is an entire function of order two (see, \cite{Selberg}). However, the number of it's zeros on the interval $\frac{1}{2}+\II x$, $0<x<t$ is $\frac{A}{4\pi}t^2+O\br{t}$, for some explicitly determined constant $A$ (see, \cite{Hejhal1}-\cite{Randol2}). The main purpose of this paper is to give
an analogous result for the zeta functions considered in
\cite{Bunke}.
\newline
\section{Preliminaries}

In the sequel we follow the notation of \cite{Bunke} (see also \cite{AG}).

Let $\mathfrak{g}=\mathfrak{k}\oplus\mathfrak{p}$ be the Cartan
decomposition of the Lie algebra $\mathfrak{g}$ of $G$,
$\mathfrak{a}$ a maximal abelian subspace of $\mathfrak{p}$ and
$M$ the centralizer of $\mathfrak{a}$ in $K$ with Lie algebra
$\mathfrak{m}$. We normalize the $\Ad{\br{G}}-$ invariant inner
product $(.,.)$ on $\mathfrak{g}$ to restrict to the metric on
$\mathfrak{p}$. Let $SX=G/M$ be the unit sphere bundle of $X$.
Hence $SY=\Gamma\backslash G/M$.

Let $\Phi\br{\mathfrak{g},\mathfrak{a}}$ be the root system and
$W=W\br{\mathfrak{g},\mathfrak{a}}\cong\mathbb{Z}_{2}$ its Weyl
group. Fix a system of positive roots
$\Phi^{+}\br{\mathfrak{g},\mathfrak{a}}\subset\Phi\br{\mathfrak{g},\mathfrak{a}}$.
Let

\[\mathfrak{n}=\sum\limits_{\alpha\in\Phi^{+}\br{\mathfrak{g},\mathfrak{a}}}\mathfrak{n}_{\alpha}\]
\newline
be the sum of the root spaces corresponding to elements of
$\Phi^{+}\br{\mathfrak{g},\mathfrak{a}}$. The decomposition
$\mathfrak{g}=\mathfrak{k}\oplus\mathfrak{p}\oplus\mathfrak{n}$
corresponds to the Iwasawa decomposition of the group $G=KAN$.
Define $\rho\in\mathfrak{a}_{\mathbb{C}}^{*}$ by

\[\rho=\frac{1}{2}\sum\limits_{\alpha\in\Phi^{+}\br{\mathfrak{g},\mathfrak{a}}}\dim\br{\mathfrak{n}_{\alpha}}\alpha.\]
\newline
The positive Weyl chamber $\mathfrak{a}^{+}$ is the half line in
$\mathfrak{a}$ on which the positive roots take positive values.
Let $A^{+}=\exp\br{\mathfrak{a}^{+}}\subset A$.

The symmetric space $X$ has a compact dual space $X_{d}=G_{d}/K$,
where $G_{d}$ is the analytic subgroup of $GL\br{n,\mathbb{C}}$
corresponding to
$\mathfrak{g}_{d}=\mathfrak{k}\oplus\mathfrak{p}_{d}$,
$\mathfrak{p}_{d}=$i$\mathfrak{p}$. We normalize the metric on
$X_{d}$ in such a way that the multiplication by i induces an
isometry between $\mathfrak{p}$ and $\mathfrak{p}_{d}$.

Let $i^{*}:R\br{K}\rightarrow R\br{M}$ be the restriction map
induced by the embedding $i:M\hookrightarrow K$, where $R\br{K}$
and $R\br{M}$ are the representation rings over $\mathbb{Z}$ of
$K$ and $M$, respectively.

Since $n$ is even, every $\sigma\in\hat{M}$ is invariant under the
action of the Weyl group $W$ (see, \cite[p.~27]{Bunke}). Let
$\sigma\in\hat{M}$. We choose $\gamma\in R\br{K}$ such that
$i^{*}\br{\gamma}=\sigma$ and represent it by $\Sigma
a_{i}\gamma_{i}$, $a_{i}\in\mathbb{Z},\gamma_{i}\in\hat{K}$. Set

\[V_{\gamma}^{\pm}=\sum\limits_{\sign{\br{a_{i}}}=\pm1}\sum\limits_{m=1}^{\abs{a_{i}}}V_{\gamma_{i}},\]
\newline
where $V_{\gamma_{i}}$ is the representation space of
$\gamma_{i}$. Define
$V\br{\gamma}^{\pm}=G\times_{K}V_{\gamma}^{\pm}$ and
$V_{d}\br{\gamma}^{\pm}=G_{d}\times_{K}V_{\gamma}^{\pm}$. To
$\gamma$ we associate $\mathbb{Z}_{2}-$ graded homogeneous vector
bundles $V\br{\gamma}=V\br{\gamma}^{+}\oplus V\br{\gamma}^{-}$ and
$V_{d}\br{\gamma}=V_{d}\br{\gamma}^{+}\oplus V_{d}\br{\gamma}^{-}$
on $X$ and $X_{d}$, respectively. Let

\[V_{Y,\chi}\br{\gamma}=\Gamma\backslash\br{V_{\chi}\otimes
V\br{\gamma}}\]
\newline
be a $\mathbb{Z}_{2}-$ graded vector bundle on $Y$, where
$\br{\chi,V_{\chi}}$ is a finite-dimensional unitary
representation of $\Gamma$.

By \cite[p.~36]{Bunke},
\begin{equation}\label{1111}
\frac{\chi\br{Y}}{\chi\br{X_{d}}}=\br{-1}^{\frac{n}{2}}\frac{\vol{\br{Y}}}{\vol{\br{X_{d}}}}.
\end{equation}\

Reasoning as in the beginning of Subsection 1.1.2 in \cite{Bunke},
we choose a Cartan subalgebra $\mathfrak{t}$ of $\mathfrak{m}$ and
a system of positive roots
$\Phi^{+}\br{\mathfrak{m}_{\mathbb{C}},\mathfrak{t}}$. Then,
$\rho_{\mathfrak{m}}\in$i$\mathfrak{t}^{*}$, where
\[\rho_{\mathfrak{m}}=\frac{1}{2}\sum\limits_{\alpha\in\Phi^{+}\br{\mathfrak{m}_{\mathbb{C}},\mathfrak{t}}}\alpha.\]
\newline
Let $\mu_{\sigma}\in$i$\mathfrak{t}^{*}$ be the highest weight of
$\sigma$. Set

\[c\br{\sigma}=\abs{\rho}^{2}+\abs{\rho_{\mathfrak{m}}}^{2}-\abs{\mu_{\sigma}+\rho_{\mathfrak{m}}}^{2},\]
\newline
where the norms are induced by the complex bilinear extension to
$\mathfrak{g}_{\mathbb{C}}$ of the inner product $(.,.)$. Finally,
we introduce the operators  (see, \cite[p.~28]{Bunke})

\[A_{d}\br{\gamma,\sigma}^{2}=\Omega +c\br{\sigma}: C^{\infty}\br{X_{d},V_{d}\br{\gamma}}\rightarrow C^{\infty}\br{X_{d},V_{d}\br{\gamma}},\]
\[A_{Y,\chi}\br{\gamma,\sigma}^{2}=-\Omega -c\br{\sigma}: C^{\infty}\br{Y,V_{Y,\chi}\br{\gamma}}\rightarrow C^{\infty}\br{Y,V_{Y,\chi}\br{\gamma}},\]
\newline
$\Omega$ being the Casimir element of the complex
universal enveloping algebra $\mathcal{U}\br{\mathfrak{g}}$ of
$\mathfrak{g}$.

Let $m_{\chi}\br{s,\gamma,\sigma}=\TRR{E_{A_{Y,\chi}\br{\gamma,\sigma}}}\br{\set{s}}$,
$m_{d}\br{s,\gamma,\sigma}=\TRR{E_{A_{d}\br{\gamma,\sigma}}}\br{\set{s}}$,
where $E_{A}\br{.}$ denotes the family of spectral projections of
a normal operator $A$.

Now, we choose a maximal abelian subalgebra $\mathfrak{t}$ of
$\mathfrak{m}$. Then,
$\mathfrak{h}=\mathfrak{t}_{\mathbb{C}}\oplus\mathfrak{a}_{\mathbb{C}}$
is a Cartan subalgebra of $\mathfrak{g}_{\mathbb{C}}$. Let
$\Phi^{+}\br{\mathfrak{g}_{\mathbb{C}},\mathfrak{h}}$ be a
positive root system having the property that, for
$\alpha\in\Phi\br{\mathfrak{g}_{\mathbb{C}},\mathfrak{h}}$,
$\alpha_{|\mathfrak{a}}\in\Phi^{+}\br{\mathfrak{g},\mathfrak{a}}$
implies
$\alpha\in\Phi^{+}\br{\mathfrak{g}_{\mathbb{C}},\mathfrak{h}}$.
Let
\[\delta=\frac{1}{2}\sum\limits_{\alpha\in\Phi^{+}\br{\mathfrak{g}_{\mathbb{C}},\mathfrak{h}}}\alpha.\]
\newline
We set $\rho_{\mathfrak{m}}=\delta-\rho$. Define the root vector
$H_{\alpha}\in\mathfrak{a}$ for
$\alpha\in\Phi^{+}\br{\mathfrak{g},\mathfrak{a}}$ by

\[\lambda\br{H_{\alpha}}=\frac{\br{\lambda,\alpha}}{\br{\alpha,\alpha}},\]
where $\lambda\in\mathfrak{a}^{*}$.

For $\alpha\in\Phi^{+}\br{\mathfrak{g},\mathfrak{a}}$, we define
$\varepsilon_{\alpha}\br{\sigma}\in\set{0,\frac{1}{2}}$ by

\[e^{2\pi\II\varepsilon_{\alpha}\br{\sigma}}=\sigma\br{e^{2\pi
\II H_{\alpha}}}\in\set{\pm 1}.\]
\newline
According to \cite[p.~47]{Bunke}, the root system
$\Phi^{+}\br{\mathfrak{g},\mathfrak{a}}$ is of the form
$\Phi^{+}\br{\mathfrak{g},\mathfrak{a}}=\set{\alpha}$ or
$\Phi^{+}\br{\mathfrak{g},\mathfrak{a}}=\set{\frac{\alpha}{2},\alpha}$
for the long root $\alpha$. Let $\alpha$ be the long root in
$\Phi^{+}\br{\mathfrak{g},\mathfrak{a}}$. We set $T=\abs{\alpha}$.
For $\sigma\in\hat{M}$, $\epsilon_{\sigma}\in\set{0,\frac{1}{2}}$
is given by

\[\epsilon_{\sigma}\equiv\frac{\abs{\rho}}{T}+\varepsilon_{\alpha}\br{\sigma}\,\MOD{\mathbb{Z}}.\]
\newline
We define the lattice
$L\br{\sigma}\subset\mathbb{R}\cong\mathfrak{a}^{*}$ by
$L\br{\sigma}=T\br{\epsilon_{\sigma}+\mathbb{Z}}$. Finally, for
$\lambda\in\mathfrak{a}_{\mathbb{C}}^{*}\cong\mathbb{C}$ we set
\[P_{\sigma}\br{\lambda}=\prod\limits_{\beta\in\Phi^{+}\br{\mathfrak{g}_{\mathbb{C}},\mathfrak{h}}}
\frac{\br{\lambda
+\mu_{\sigma}+\rho_{\mathfrak{m}},\beta}}{\br{\delta,\beta}}.\]
\newline

Since $n$ is even, there exists a $\sigma-$ admissible $\gamma\in
R\br{K}$ for every $\sigma\in\hat{M}$ (see, \cite[p.~49, Lemma
1.18]{Bunke}). Here, $\gamma\in R\br{K}$ is called $\sigma-$
admissible if $i^{*}\br{\gamma}=\sigma$ and
$m_{d}\br{s,\gamma,\sigma}=P_{\sigma}\br{s}$ for all $0\leq s\in
L\br{\sigma}$.

\section{Zeta functions and the geodesic flow}

Since $\Gamma\subset G$ is co-compact and torsion free, there are
only two types of conjugacy classes - the class of the identity
$1\in\Gamma$ and classes of hyperbolic elements.

Let $g\in G$ be hyperbolic. Then there is an Iwasawa decomposition
$G=NAK$ such that $g=am\in A^{+}M$. Following \cite[p.~59]{Bunke},
we define

\[l\br{g}=\abs{\log\br{a}}.\]
\newline
Let $\Gamma_{\textrm{h}}$, resp. $\text{P}\Gamma_{\textrm{h}}$
denote the set of the $\Gamma -$ conjugacy classes of hyperbolic
resp. primitive hyperbolic elements in $\Gamma$.

Let $\varphi$ be the geodesic flow on $SY$ determined by the
metric of $Y$. In the representation $SY=\Gamma\backslash G/M$, $\varphi$
is given by

\[\varphi : \mathbb{R}\times SY\ni\br{t,\Gamma gM}\rightarrow\Gamma g\exp\br{-tH}M\in SY,\]
\newline
where $H$ is the unit vector in $\mathfrak{a}^{+}$. If
$V_{\chi}\br{\sigma}=\Gamma\backslash\br{G\times_{M}V_{\sigma}\otimes
V_{\chi}}$ is the vector bundle corresponding to
finite-dimensional unitary representations
$\br{\sigma,V_{\sigma}}$ of $M$ and $\br{\chi,V_{\chi}}$ of
$\Gamma$, then we define a lift $\varphi_{\chi,\sigma}$ of
$\varphi$ to $V_{\chi}\br{\sigma}$ by (see, \cite[p.~95]{Bunke})

\[\varphi_{\chi,\sigma} : \mathbb{R}\times V_{\chi}\br{\sigma}\ni\br{t,\SqBr{g,v\otimes w}}\rightarrow
\SqBr{g\exp\br{-tH},v\otimes w}\in V_{\chi}\br{\sigma}.\]
\newline
For $\RE{\br{s}}>2\rho$, the Ruelle zeta function for the flow
$\varphi_{\chi,\sigma}$ is defined by the infinite product

\[Z_{R,\chi}\br{s,\sigma}=\prod\limits_{\gamma_{0}\in\text{P}\Gamma_{\textrm{h}}}
\det\br{1-\br{\sigma\br{m}\otimes\chi\br{\gamma_{0}}}e^{-sl\br{\gamma_{0}}}}^{\br{-1}^{n-1}}.\]
\newline
The Selberg zeta function for the flow $\varphi_{\chi,\sigma}$ is
given by

\begin{equation}\label{3.0}
Z_{S,\chi}\br{s,\sigma}=
\end{equation}
\[\prod\limits_{\gamma_{0}\in\text{P}\Gamma_{\textrm{h}}}\prod\limits_{k=0}^{+\infty}
\det\br{1-\br{\sigma\br{m}\otimes\chi\br{\gamma_{0}}\otimes
S^{k}\br{\Ad{\br{ma}_{\bar{\mathfrak{n}}}}}}e^{-\br{s+\rho}l\br{\gamma_{0}}}},\]
\newline
for $\RE{\br{s}}>\rho$, where $S^{k}$ denotes the $k-$th symmetric
power of an endomorphism, $\bar{\mathfrak{n}}=\theta\mathfrak{n}$
is the sum of negative root spaces of $\mathfrak{a}$ as usual, and
$\theta$ is the Cartan involution of $\mathfrak{g}$.

Let $\mathfrak{n}_{\mathbb{C}}$ be the complexification of
$\mathfrak{n}$. For
$\lambda\in\mathbb{C}\cong\mathfrak{a}_{\mathbb{C}}^{*}$ let
$\mathbb{C}_{\lambda}$ denote the one-dimensional representation
of $A$ given by $A\ni a\rightarrow a^{\lambda}$. Let $p\geq 0$.
There exist sets
\[I_{p}=\set{\br{\tau,\lambda}\mid\tau\in\hat{M},\lambda\in\mathbb{R}}\]
\newline
such that $\Lambda^{p}\mathfrak{n}_{\mathbb{C}}$ as a
representation of $MA$ decomposes with respect to $MA$ as
\[\Lambda^{p}\mathfrak{n}_{\mathbb{C}}=\sum\limits_{\br{\tau,\lambda}\in I_{p}}V_{\tau}\otimes\mathbb{C}_{\lambda},\]
\newline
where $V_{\tau}$ is the space of the representation $\tau$. Bunke
and Olbrich proved that the Ruelle zeta function $Z_{R,\chi}\br{s,\sigma}$ has the following
representation (see, \cite[p.~99, Prop. 3.4]{Bunke})

\begin{equation}\label{3.1}
Z_{R,\chi}\br{s,\sigma}=\prod\limits_{p=0}^{n-1}\prod\limits_{\br{\tau,\lambda}\in I_{p}}Z_{S,\chi}\br{s+\rho-\lambda,\tau\otimes\sigma}^{\br{-1}^{p}}.
\end{equation}

Let $d_{Y}=-\br{-1}^{\frac{n}{2}}$. The following theorem holds
true (see, \cite[p.~113, Th. 3.15]{Bunke})
\newline
\begin{teorem}{A.}\label{t3.15}
\textit{The Selberg zeta function $Z_{S,\chi}\br{s,\sigma}$ has a
meromorphic continuation to all of $\mathbb{C}$. If $\gamma$ is
$\sigma-$admissible, then the singularities (zeros and poles) of
$Z_{S,\chi}\br{s,\sigma}$ are the following ones:\\
\begin{enumerate}
    \item at $\pm is$ of order $m_{\chi}\br{s,\gamma,\sigma}$ if
    $s\neq 0$ is an eigenvalue of
    $A_{Y,\chi}\br{\gamma,\sigma}$,\\
    \item at $s=0$ of order $2m_{\chi}\br{0,\gamma,\sigma}$ if
    $0$ is an eigenvalue of $A_{Y,\chi}\br{\gamma,\sigma}$,\\
    \item at $-s$, $s\in T\br{\mathbb{N}-\epsilon_{\sigma}}$ of
    order
    $2\frac{d_{Y}\dim\br{\chi}\vol\br{Y}}{\vol\br{X_{d}}}m_{d}\br{s,\gamma,\sigma}$. Then $s>0$ is an eigenvalue of $A_{d}\br{\gamma,\sigma}$.\\
\end{enumerate}
If two such points coincide, then the orders add up.}
\end{teorem}\

Since $\lambda$ runs through $\frak{a}-$weights on the exterior algebra of $\frak{n}$, the shifts $\rho-\lambda$ that appear in (\ref{3.1}) are always contained in the interval $\SqBr{-\rho,\rho}$. Hence, by (\ref{3.1}) and Theorem A, $s_R\in\mathbb{R}$ for all singularities $s_R$ of the $Z_{R,\chi}\br{s,\sigma}$ with $\abs{\RE{\br{s_R}}}>\rho$. 

In \cite{AG}, we proved that there exist entire functions $Z_{S}^{1}\br{s}$, $Z_{S}^{2}\br{s}$ of order at most $n$ such that
\begin{equation}\label{3.3}
Z_{S,\chi}\br{s,\sigma}=\frac{Z_{S}^{1}\br{s}}{Z_{S}^{2}\br{s}}.
\end{equation}\
\newline
Here, $\gamma$ is $\sigma\,-$ admissible, the zeros of $Z_{S}^{1}\br{s}$ correspond to the
zeros of $Z_{S,\chi}\br{s,\sigma}$ and the zeros of
$Z_{S}^{2}\br{s}$ correspond to the poles of
$Z_{S,\chi}\br{s,\sigma}$. The orders of the zeros of
$Z_{S}^{1}\br{s}$ resp. $Z_{S}^{2}\br{s}$ equal the orders of the
corresponding zeros resp. poles of $Z_{S,\chi}\br{s,\sigma}$.

\section{Results}

\begin{lem}{4.1.}\label{IV.1}
\textit{If $\gamma$ is $\sigma\,-$ admissible, then}\\
\[Z_{S,\chi}\br{s,\sigma}=e^{-K\int\limits_{0}^{s}P_{\sigma}\br{w}\left\{%
\begin{array}{ll}
    \tan\br{\frac{\pi w}{T}}, & \hbox{$\epsilon_{\sigma}=\frac{1}{2}$} \\
    -\cot\br{\frac{\pi w}{T}}, & \hbox{\hbox{$\epsilon_{\sigma}=0$}} \\
\end{array}%
\right\}dw}Z_{S,\chi}\br{-s,\sigma},\]
\newline
\textit{where $K=\frac{2\pi\dim\br{\chi}\chi\br{Y}}{\chi\br{X_{d}}T}$}.
\end{lem}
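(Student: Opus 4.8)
The goal is a functional equation relating $Z_{S,\chi}(s,\sigma)$ to $Z_{S,\chi}(-s,\sigma)$, with an explicit exponential factor whose integrand involves $P_\sigma(w)$ times a tangent or cotangent. The natural approach is to compare the divisors of the two sides via Theorem~A. First I would form the quotient
\[
\Xi(s) = \frac{Z_{S,\chi}(s,\sigma)}{Z_{S,\chi}(-s,\sigma)}
\]
and observe that, by parts (1) and (2) of Theorem~A, the spectral singularities at $\pm is$ with orders governed by $m_\chi(s,\gamma,\sigma)$ are symmetric under $s\mapsto -s$ and therefore cancel in $\Xi(s)$. What survives is exactly the contribution of the ``trivial'' singularities in part (3): at each $s_0\in T(\mathbb{N}-\epsilon_\sigma)$, the function $Z_{S,\chi}(s,\sigma)$ has a singularity of order
\[
2\,\frac{d_Y\dim(\chi)\vol(Y)}{\vol(X_d)}\,m_d(s_0,\gamma,\sigma)
= 2\,\frac{d_Y\dim(\chi)\vol(Y)}{\vol(X_d)}\,P_\sigma(s_0)
\]
coming from $Z_{S,\chi}(-s,\sigma)$ (note the sign: part (3) places these at $-s_0$), and by $\sigma$-admissibility $m_d(s_0,\gamma,\sigma)=P_\sigma(s_0)$. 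So $\Xi(s)$ is, up to an entire nowhere-vanishing factor, a product over the arithmetic progression $s_0\in T(\epsilon_\sigma+\mathbb{N})$ of elementary factors with prescribed zero/pole orders at $\pm s_0$.

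Next I would build the elementary building block explicitly. The function whose logarithmic derivative is $\pi/T\cdot\tan(\pi w/T)$ has simple poles exactly at $w\in T(\tfrac12+\mathbb{Z})=L(\sigma)$ when $\epsilon_\sigma=\tfrac12$, while $-\pi/T\cdot\cot(\pi w/T)$ has simple poles at $w\in T\mathbb{Z}=L(\sigma)$ when $\epsilon_\sigma=0$; in both cases the pole set is precisely $\pm T(\epsilon_\sigma+\mathbb{N})$ together with (when $\epsilon_\sigma=0$) the point $0$. Thus
\[
\exp\!\left(-K\int_0^s P_\sigma(w)\,\left\{\begin{array}{ll}\tan(\pi w/T),&\epsilon_\sigma=\tfrac12\\[2pt]-\cot(\pi w/T),&\epsilon_\sigma=0\end{array}\right\}dw\right)
\]
has, at each $s_0\in T(\epsilon_\sigma+\mathbb{N})$, a zero or pole whose order is the residue of the integrand, namely $-K\cdot\frac{T}{\pi}\cdot P_\sigma(s_0)$ up to sign, and the antisymmetry of $\tan$ and $\cot$ forces the matching behaviour at $-s_0$. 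Matching the constant: we need $-K\cdot\frac{T}{\pi} = -2\,\frac{d_Y\dim(\chi)\vol(Y)}{\vol(X_d)}$, and unwinding $d_Y=-(-1)^{n/2}$ together with $(\ref{1111})$, i.e. $\frac{\chi(Y)}{\chi(X_d)}=(-1)^{n/2}\frac{\vol(Y)}{\vol(X_d)}$, gives exactly $K=\frac{2\pi\dim(\chi)\chi(Y)}{\chi(X_d)T}$. The fact that $P_\sigma$ is a polynomial means the integral converges at each finite $s$ and the exponential is a genuine meromorphic function on $\mathbb{C}$, so there are no convergence subtleties in defining the right-hand side.

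Finally I would argue that the ratio of the two sides of the claimed identity is an entire function with no zeros — its divisor is empty because we have matched the divisor of $\Xi(s)$ against the divisor of the exponential factor — and that it has finite order (both zeta functions are ratios of order-$\le n$ entire functions by $(\ref{3.3})$, and the exponential of a polynomial-times-trig-integral is of finite order since the integrand grows polynomially off the poles), hence by a Hadamard-type argument it equals $e^{Q(s)}$ for a polynomial $Q$; then one pins down $Q$ by examining the behaviour as $s\to\infty$ along a suitable ray, or more cheaply by noting $\Xi(s)\Xi(-s)=1$ and that the exponential factor satisfies the same relation (odd integrand), forcing $Q$ odd, after which a residue/growth comparison kills all remaining coefficients. \textbf{The main obstacle} I anticipate is the last step: controlling the entire nowhere-vanishing factor and proving it is exactly $1$ rather than $e^{Q(s)}$ for some nonzero polynomial $Q$. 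This likely requires either a precise asymptotic expansion of $\log Z_{S,\chi}(s,\sigma)$ as $\RE(s)\to+\infty$ (where the defining Euler product is controlled) compared term-by-term with the asymptotics of the explicit integral, or invoking the more refined determinant formula for $Z_{S,\chi}$ from \cite{Bunke} that already encodes the normalising constants; getting the polynomial $Q$ to vanish identically is where the real work lies, whereas the divisor bookkeeping and the constant $K$ are essentially forced.
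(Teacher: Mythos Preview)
Your approach is correct in its divisor bookkeeping, but it is a genuinely different route from the paper's, and the gap you yourself flag is the crux of the difference.

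The paper does not do any divisor matching at all. It starts directly from the determinant representation of Bunke--Olbrich (\cite[p.~118, Th.~3.19]{Bunke}),
\[
Z_{S,\chi}(s,\sigma)=\det\bigl(A_{Y,\chi}(\gamma,\sigma)^{2}+s^{2}\bigr)\,
\det\bigl(A_{d}(\gamma,\sigma)+s\bigr)^{-\frac{2\dim(\chi)\chi(Y)}{\chi(X_{d})}}
\exp\bigl(\textstyle\sum c_{-m}\,\ast\,s^{2m}\bigr),
\]
observes that the first and third factors are manifestly even in $s$, so
\[
\frac{Z_{S,\chi}(-s,\sigma)}{Z_{S,\chi}(s,\sigma)}
=\Bigl(\frac{\det(A_{d}(\gamma,\sigma)-s)}{\det(A_{d}(\gamma,\sigma)+s)}\Bigr)^{-\frac{2\dim(\chi)\chi(Y)}{\chi(X_{d})}},
\]
and then quotes \cite[pp.~120--122]{Bunke} for the exact identification of this determinant ratio with
$\exp\bigl(-\tfrac{\pi}{T}\int_{0}^{s}P_{\sigma}(w)\{\tan,-\cot\}(\pi w/T)\,dw\bigr)$.
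That is the whole proof: three lines, no Hadamard factorisation, no polynomial $Q$ to kill.

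What this buys is precisely the elimination of your ``main obstacle''. Your argument matches divisors correctly and would produce the functional equation up to a factor $e^{Q(s)}$ with $Q$ an odd polynomial of degree at most $n$; but the oddness and the Euler-product control as $\RE(s)\to+\infty$ are not by themselves enough to force $Q\equiv0$ without a rather delicate asymptotic comparison --- essentially you would have to redo the Bunke--Olbrich computation of $D^{+}(s)/D^{-}(s)$ anyway. You even name the determinant formula as a possible fallback; in fact it is the main tool, and once you invoke it the divisor bookkeeping becomes redundant. So your proposal is not wrong, but it walks the long way around a door that is already open.
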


\begin{proof}
By \cite[p.~118, Th. 3.19]{Bunke},
$Z_{S,\chi}\br{s,\sigma}$ has the representation

\[Z_{S,\chi}\br{s,\sigma}=\det\br{A_{Y,\chi}\br{\gamma,\sigma}^{2}+s^{2}}
\det\br{A_{d}\br{\gamma,\sigma}+s}^{-\frac{2\dim\br{\chi}\chi\br{Y}}{\chi\br{X_{d}}}}\cdot\]
\[\exp\br{\frac{\dim\br{\chi}\chi\br{Y}}{\chi\br{X_{d}}}
\sum\limits_{m=1}^{\frac{n}{2}}c_{-m}\frac{s^{2m}}{m!}\br{\sum\limits_{r=1}^{m-1}\frac{1}{r}-2\sum\limits_{r=1}^{2m-1}\frac{1}{r}}},\]
\newline
where the coefficients $c_{k}$ are defined by the asymptotic
expansion

\[\TRR{e^{-tA_{d}\br{\gamma,\sigma}^{2}}}\overset{t\rightarrow 0}{\sim}\sum\limits_{k=-\frac{n}{2}}^{\infty}c_{k}t^{k}.\]
\newline
Hence, (see, \cite[pp.~120--122]{Bunke})

\[Z_{S,\chi}\br{-s,\sigma}=Z_{S,\chi}\br{s,\sigma}\cdot\br{\frac{\det\br{A_{d}\br{\gamma,\sigma}-s}}{\det\br{A_{d}\br{\gamma,\sigma}+s}}}^{-\frac{2\dim\br{\chi}\chi\br{Y}}{\chi\br{X_{d}}}}=\]
\[Z_{S,\chi}\br{s,\sigma}\cdot\br{\frac{D^{+}(s)}{D^{-}(s)}}^{-\frac{2\dim\br{\chi}\chi\br{Y}}{\chi\br{X_{d}}}}=\]
\[Z_{S,\chi}\br{s,\sigma}\cdot\exp\br{-\frac{\pi}{T}\int\limits_{0}^{s}P_{\sigma}\br{w}\left\{%
\begin{array}{ll}
    \tan\br{\frac{\pi w}{T}}, & \hbox{$\epsilon_{\sigma}=\frac{1}{2}$} \\
    -\cot\br{\frac{\pi w}{T}}, & \hbox{\hbox{$\epsilon_{\sigma}=0$}} \\
\end{array}%
\right\}dw}^{-\frac{2\dim\br{\chi}\chi\br{Y}}{\chi\br{X_{d}}}}.\]
\newline
This completes the proof.
\end{proof}\

\begin{lem}{4.2.}\label{IV.2}
\textit{If $\gamma$ is $\sigma\,-$ admissible, then}\\
\[P_{\sigma}\br{w}=\sum\limits_{k=0}^{\frac{n}{2}-1}p_{n-2k-1}w^{n-2k-1},\]
\textit{where}
\end{lem}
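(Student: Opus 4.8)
The plan is to analyze the polynomial $P_{\sigma}(\lambda)$ directly from its definition as the product
\[P_{\sigma}\br{\lambda}=\prod\limits_{\beta\in\Phi^{+}\br{\mathfrak{g}_{\mathbb{C}},\mathfrak{h}}}
\frac{\br{\lambda+\mu_{\sigma}+\rho_{\mathfrak{m}},\beta}}{\br{\delta,\beta}}\]
and establish two things: its degree equals the number of roots $\beta$ whose restriction to $\mathfrak{a}$ is nonzero, which in the rank-one setting is precisely $n-1$; and that it is an \emph{odd} polynomial in $\lambda$ when $\gamma$ is $\sigma$-admissible, so only the odd-degree monomials $w^{n-1},w^{n-3},\dots,w^{1}$ survive, giving exactly $\frac{n}{2}$ terms indexed by $k=0,1,\dots,\frac{n}{2}-1$.

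First I would compute the degree. A factor $\br{\lambda+\mu_{\sigma}+\rho_{\mathfrak{m}},\beta}$ depends on $\lambda$ (which lies in $\mathfrak{a}_{\mathbb{C}}^{*}$) only through the $\mathfrak{a}$-component of $\beta$; it is a nonzero-degree linear factor in $\lambda$ precisely when $\beta_{|\mathfrak{a}}\neq 0$, i.e.\ when $\beta$ restricts to a root in $\Phi^{+}\br{\mathfrak{g},\mathfrak{a}}$, and is a constant otherwise (those $\beta$ with $\beta_{|\mathfrak{a}}=0$ are the roots of $\mathfrak{m}_{\mathbb{C}}$). By the standard correspondence between $\Phi^{+}\br{\mathfrak{g}_{\mathbb{C}},\mathfrak{h}}$ and $\Phi^{+}\br{\mathfrak{g},\mathfrak{a}}$ with multiplicities, the count of $\beta$ with nonzero restriction equals $\sum_{\alpha\in\Phi^{+}\br{\mathfrak{g},\mathfrak{a}}}\dim\mathfrak{n}_{\alpha}=\dim\mathfrak{n}$, and since $\dim X=\dim\mathfrak{p}=\dim\mathfrak{a}+\dim\mathfrak{n}=1+\dim\mathfrak{n}=n$, this is $n-1$. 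Hence $\deg P_{\sigma}=n-1$.

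Next I would establish parity. Since $\gamma$ is $\sigma$-admissible, $m_{d}\br{s,\gamma,\sigma}=P_{\sigma}\br{s}$ for all $0\leq s\in L\br{\sigma}$, and $m_{d}$ is a dimension of a spectral projection, hence nonnegative; moreover the Weyl-group symmetry available because $n$ is even (every $\sigma\in\hat M$ is $W$-invariant) forces a symmetry of $P_{\sigma}$ under the reflection $\lambda\mapsto-\lambda$ combined with the shift, which one unwinds using $\rho_{\mathfrak{m}}=\delta-\rho$: the product over $\beta$ pairs up $\beta$ with its $W$-conjugate, and on the $\mathfrak{a}$-part the long-root reflection sends $\lambda\mapsto-\lambda$. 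The cleanest route is to invoke the functional-equation computation behind Lemma~\ref{IV.1}: the factor $\br{D^{+}(s)/D^{-}(s)}$ there is built from $P_{\sigma}$, and consistency of the trigonometric identity (with $\tan$ odd and $\cot$ odd) forces $P_{\sigma}$ to be odd. Either way one concludes $P_{\sigma}\br{-w}=-P_{\sigma}\br{w}$, so all even-degree coefficients vanish and
\[P_{\sigma}\br{w}=\sum_{k=0}^{\frac{n}{2}-1}p_{n-2k-1}w^{n-2k-1}.\]

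The main obstacle I anticipate is pinning down the parity rigorously rather than the degree, which is a routine root-count: one must carefully track how the $W\cong\mathbb{Z}_{2}$ action on $\Phi^{+}\br{\mathfrak{g}_{\mathbb{C}},\mathfrak{h}}$ interacts with the shift by $\mu_{\sigma}+\rho_{\mathfrak{m}}$, using that $\mu_{\sigma}$ is $W$-invariant (the even-dimensional hypothesis) and that $\rho_{\mathfrak{m}}=\delta-\rho$ has the right transformation law, so that the substitution $\lambda\mapsto-\lambda$ merely permutes the factors of the product while introducing exactly $n-1$ sign changes—an odd number—yielding the overall minus sign. The explicit formula for the coefficients $p_{n-2k-1}$ then follows by expanding the product, which is the content of the displayed "where" clause and can be read off termwise.
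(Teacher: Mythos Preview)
Your treatment of the degree and parity of $P_{\sigma}$ is a reasonable direct alternative to what the paper does, which is simply to quote \cite[pp.~47--48]{Bunke} for the facts $P_{\sigma}(0)=0$, $P_{\sigma}(-w)=-P_{\sigma}(w)$, $P_{\sigma}(w)=w\,Q_{\sigma}(w)$ with $Q_{\sigma}$ even, and \cite{Brocker,Wakayama} for $P_{\sigma}$ being monic of degree $n-1$. Your root-count for the degree is correct; your parity argument is in the right spirit (the nontrivial element of $W(\mathfrak{g},\mathfrak{a})$ lifts, in the even-dimensional case, to an element of $W(\mathfrak{g}_{\mathbb{C}},\mathfrak{h})$ acting as $-1$ on $\mathfrak{a}^{*}$ and as an element of $W(\mathfrak{m}_{\mathbb{C}},\mathfrak{t})$ on $\mathfrak{t}^{*}$, and this permutes the factors while negating the $n-1$ non-compact ones), though it needs to be written out with more care than you have given it.

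The genuine gap is in your final sentence. The ``where'' clause of the lemma does \emph{not} record the coefficients one gets by expanding the product $\prod_{\beta}(\lambda+\mu_{\sigma}+\rho_{\mathfrak{m}},\beta)/(\delta,\beta)$. It asserts the identities
\[
p_{n-2k-1}=\frac{2T}{\bigl(\tfrac{n}{2}-k-1\bigr)!}\,c_{-(\frac{n}{2}-k)},\qquad c_{-\frac{n}{2}}=\frac{\bigl(\tfrac{n}{2}-1\bigr)!}{2T},
\]
where the $c_{-m}$ are the coefficients in the small-time heat expansion $\TRR\,e^{-tA_{d}(\gamma,\sigma)^{2}}\sim\sum_{k\ge -n/2}c_{k}t^{k}$. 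Relating the $p_{n-2k-1}$ to these heat-trace coefficients is not a matter of ``reading off termwise'' from the product; it is a nontrivial input from \cite[p.~118]{Bunke}, where the Plancherel computation gives $Q_{\sigma}(w)=\sum q_{2i}w^{2i}$ with $q_{2i}=\frac{2T}{i!}c_{-(i+1)}$, and then $p_{n-2k-1}=q_{n-2k-2}$. The second displayed identity is then just the specialisation $p_{n-1}=1$ (monicity). This link between $P_{\sigma}$ and the $c_{-m}$ is precisely what the paper needs later when comparing the functional-equation exponent with the determinant formula in Lemma~4.1, so it cannot be bypassed; your proposal as written does not supply it.
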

\[p_{n-2k-1}=\frac{2T}{\br{\frac{n}{2}-k-1}!}c_{-\br{\frac{n}{2}-k}},\,\,k=0,1,...,\frac{n}{2}-1,\]
\[c_{-\frac{n}{2}}=\frac{\br{\frac{n}{2}-1}!}{2T}.\]

\begin{proof}
By \cite[pp.~47-48]{Bunke}, $P_{\sigma}\br{0}=0$, $P_{\sigma}\br{-w}=-P_{\sigma}\br{w}$ and $P_{\sigma}\br{w}=w\cdot Q_{\sigma}\br{w}$, where $Q_{\sigma}$ is an even polynomial. Hence, $P_{\sigma}$ is an odd polynomial. Moreover, $P_{\sigma}\br{w}\in\mathbb{R}\SqBr{w}$ is a monic polynomial of degree $n-1$ (see, e.g., \cite[pp.~17-19]{Brocker}, \cite[pp.~240-243]{Wakayama}).

Put
\[P_{\sigma}\br{w}=\sum\limits_{k=0}^{\frac{n}{2}-1}p_{n-2k-1}w^{n-2k-1},\,\,p_{n-1}=1.\]

By \cite[p.~118]{Bunke}, $Q_{\sigma}\br{w}=\sum\limits_{k=0}^{\frac{n}{2}-1}q_{n-2k-2}w^{n-2k-2}$, where $q_{2i}=\frac{2T}{i!}c_{-\br{i+1}}$, $i=0,1,...,\frac{n}{2}-1$. In other words,

\[p_{n-2k-1}=q_{n-2k-2}=\frac{2T}{\br{\frac{n}{2}-k-1}!}c_{-\br{\frac{n}{2}-k}},\,\,k=0,1,...,\frac{n}{2}-1.\]
\newline
This completes the proof.
\end{proof}\

\begin{teorem}{4.3.}\label{IV.3}
\textit{Suppose $\sigma_{1}<0$ fixed. If $\gamma$ is $\sigma\,-$ admissible, then}\\
\[Z_{S,\chi}\br{\sigma_{1}+it,\sigma}=f(t)e^{g(t)}\cdot Z_{S,\chi}\br{-\sigma_{1}-it,\sigma},\,\abs{t}\rightarrow\infty,\]
\textit{where}
\end{teorem}
\[f(t)=\]
\[\exp\br{-\sum\limits_{k=0}^{\frac{n}{2}-1}p_{n-2k-1}\frac{\abs{t}}{t}\frac{K\II}{n-2k}\sum\limits_{l=0}^{\frac{n}{2}-k}\binom{n-2k}{2l}(-1)^{l}\sigma_{1}^{n-2k-2l}t^{2l}+O\br{1}},\]
\[g(t)=-\sum\limits_{k=0}^{\frac{n}{2}-1}p_{n-2k-1}\frac{K}{n-2k}\sum\limits_{l=1}^{\frac{n}{2}-k}\binom{n-2k}{2l-1}(-1)^{l}\sigma_{1}^{n-2k-2l+1}\abs{t}^{2l-1}.\]

\begin{proof}
Suppose $\epsilon_{\sigma}=\frac{1}{2}$. By Lemma \ref{IV.2}.2,

\[K\int\limits_{0}^{s}P_{\sigma}\br{w}\tan\br{\frac{\pi w}{T}}dw=K\sum\limits_{k=0}^{\frac{n}{2}-1}p_{n-2k-1}\int\limits_{0}^{s}w^{n-2k-1}\tan\br{\frac{\pi w}{T}}dw.\]
\newline
Here, we assume that the integration will be carried out along the line segment joining the origin to $s$ (see, \cite[p.~211]{Randol2}). As $\abs{t}\rightarrow\infty$,

\[\tan\pi\br{\sigma_{1}+\II t}=\II\frac{t}{\abs{t}}+O\br{e^{-2\pi\abs{t}}}.\]
\newline
Hence, at points on a vertical line away from the real axis, one has

\begin{equation}\label{44.1}
K\int\limits_{0}^{s}P_{\sigma}\br{w}\tan\br{\frac{\pi w}{T}}dw=
\end{equation}
\[K\sum\limits_{k=0}^{\frac{n}{2}-1}p_{n-2k-1}\br{\frac{\sigma_{1}}{t}+\II}^{n-2k}\frac{t}{\abs{t}}\II\frac{t^{n-2k}}{n-2k}+\]
\[K\sum\limits_{k=0}^{\frac{n}{2}-1}p_{n-2k-1}\br{\frac{\sigma_{1}}{t}+\II}^{n-2k}\int\limits_{0}^{t}y^{n-2k-1}O\br{e^{-2\pi\frac{\abs{y}}{T}}}dy=\]
\[\sum\limits_{k=0}^{\frac{n}{2}-1}p_{n-2k-1}\frac{t}{\abs{t}}\frac{K\II}{n-2k}\sum\limits_{l=0}^{n-2k}\binom{n-2k}{l}\sigma_{1}^{n-2k-l}t^{l}\II^{l}+S=\]
\[\sum\limits_{k=0}^{\frac{n}{2}-1}p_{n-2k-1}\frac{t}{\abs{t}}\frac{K\II}{n-2k}\sum\limits_{l=0}^{\frac{n}{2}-k}\binom{n-2k}{2l}\sigma_{1}^{n-2k-2l}t^{2l}\II^{2l}+\]
\[\sum\limits_{k=0}^{\frac{n}{2}-1}p_{n-2k-1}\frac{t}{\abs{t}}\frac{K\II}{n-2k}\sum\limits_{l=1}^{\frac{n}{2}-k}\binom{n-2k}{2l-1}\sigma_{1}^{n-2k-2l+1}t^{2l-1}\II^{2l-1}+S=\]
\[\sum\limits_{k=0}^{\frac{n}{2}-1}p_{n-2k-1}\frac{t}{\abs{t}}\frac{K\II}{n-2k}\sum\limits_{l=0}^{\frac{n}{2}-k}\binom{n-2k}{2l}(-1)^{l}\sigma_{1}^{n-2k-2l}t^{2l}+\]
\[\sum\limits_{k=0}^{\frac{n}{2}-1}p_{n-2k-1}\frac{K}{n-2k}\sum\limits_{l=1}^{\frac{n}{2}-k}\binom{n-2k}{2l-1}(-1)^{l}\sigma_{1}^{n-2k-2l+1}\abs{t}^{2l-1}+S.\]
\newline
Assume $t>0$. We have

\[\int\limits_{0}^{t}y^{n-2k-1}O\br{e^{-2\pi\frac{\abs{y}}{T}}}dy=O\br{\int\limits_{0}^{t}y^{n-2k-1}e^{-2\pi\frac{y}{T}}dy}.\]
\newline
Applying integration by parts $n-2k-1$ times, one easily obtains that

\[\int\limits_{0}^{t}y^{n-2k-1}e^{-2\pi\frac{y}{T}}dy=O\br{1}.\]
Hence,
\[\int\limits_{0}^{t}y^{n-2k-1}O\br{e^{-2\pi\frac{\abs{y}}{T}}}dy=O\br{1}.\]
If $t<0$, then

\[\int\limits_{0}^{t}y^{n-2k-1}O\br{e^{-2\pi\frac{\abs{y}}{T}}}dy=\int\limits_{0}^{-t}y^{n-2k-1}O\br{e^{-2\pi\frac{\abs{y}}{T}}}dy=O\br{1}.\]
\newline
We conclude, $S=O\br{1}$. Therefore, by (\ref{44.1}),

\begin{equation}\label{44.2}
-K\int\limits_{0}^{s}P_{\sigma}\br{w}\tan\br{\frac{\pi w}{T}}dw=
\end{equation}
\[-\sum\limits_{k=0}^{\frac{n}{2}-1}p_{n-2k-1}\frac{t}{\abs{t}}\frac{K\II}{n-2k}\sum\limits_{l=0}^{\frac{n}{2}-k}\binom{n-2k}{2l}(-1)^{l}\sigma_{1}^{n-2k-2l}t^{2l}-\]
\[\sum\limits_{k=0}^{\frac{n}{2}-1}p_{n-2k-1}\frac{K}{n-2k}\sum\limits_{l=1}^{\frac{n}{2}-k}\binom{n-2k}{2l-1}(-1)^{l}\sigma_{1}^{n-2k-2l+1}\abs{t}^{2l-1}+O\br{1}.\]
\newline
Now, suppose $\epsilon_{\sigma}=0$. It is not difficult to verify that

\[\cot\pi\br{\sigma_{1}+\II t}=-\II\frac{t}{\abs{t}}+O\br{e^{-2\pi\abs{t}}}\]
\newline
as $\abs{t}\rightarrow\infty$. We have

\begin{equation}\label{44.3}
-K\int\limits_{0}^{s}P_{\sigma}\br{w}\br{-\cot\br{\frac{\pi w}{T}}}dw=K\int\limits_{0}^{s}P_{\sigma}\br{w}\cot\br{\frac{\pi w}{T}}dw=
\end{equation}
\[K\sum\limits_{k=0}^{\frac{n}{2}-1}p_{n-2k-1}\int\limits_{0}^{s}w^{n-2k-1}\cot\br{\frac{\pi w}{T}}dw=\]
\[-K\sum\limits_{k=0}^{\frac{n}{2}-1}p_{n-2k-1}\br{\frac{\sigma_{1}}{t}+\II}^{n-2k}\frac{t}{\abs{t}}\II\frac{t^{n-2k}}{n-2k}+S=\]
\[-\sum\limits_{k=0}^{\frac{n}{2}-1}p_{n-2k-1}\frac{t}{\abs{t}}\frac{K\II}{n-2k}\sum\limits_{l=0}^{\frac{n}{2}-k}\binom{n-2k}{2l}(-1)^{l}\sigma_{1}^{n-2k-2l}t^{2l}-\]
\[\sum\limits_{k=0}^{\frac{n}{2}-1}p_{n-2k-1}\frac{K}{n-2k}\sum\limits_{l=1}^{\frac{n}{2}-k}\binom{n-2k}{2l-1}(-1)^{l}\sigma_{1}^{n-2k-2l+1}\abs{t}^{2l-1}+O\br{1}.\]
\newline
Combining (\ref{44.2}) and (\ref{44.3}), we finally obtain

\begin{equation}\label{44.4}
-K\int\limits_{0}^{s}P_{\sigma}\br{w}\left\{%
\begin{array}{ll}
    \tan\br{\frac{\pi w}{T}}, & \hbox{$\epsilon_{\sigma}=\frac{1}{2}$} \\
    -\cot\br{\frac{\pi w}{T}}, & \hbox{\hbox{$\epsilon_{\sigma}=0$}} \\
\end{array}%
\right\}dw=
\end{equation}
\[-\sum\limits_{k=0}^{\frac{n}{2}-1}p_{n-2k-1}\frac{t}{\abs{t}}\frac{K\II}{n-2k}\sum\limits_{l=0}^{\frac{n}{2}-k}\binom{n-2k}{2l}(-1)^{l}\sigma_{1}^{n-2k-2l}t^{2l}-\]
\[\sum\limits_{k=0}^{\frac{n}{2}-1}p_{n-2k-1}\frac{K}{n-2k}\sum\limits_{l=1}^{\frac{n}{2}-k}\binom{n-2k}{2l-1}(-1)^{l}\sigma_{1}^{n-2k-2l+1}\abs{t}^{2l-1}+O\br{1}.\]
\newline
The theorem now follows from (\ref{44.4}) and Lemma \ref{IV.2}.1.
\end{proof}\

\begin{lem}{4.4.}\label{IV.4}
\textit{If $\gamma$ is $\sigma\,-$ admissible, then}
\newline
\[\abs{Z_{S}^{i}\br{\sigma_{1}+\II t}}=e^{O\br{\abs{t}^{n-1}}}\]
\newline
\textit{uniformly in any bounded strip $b_{1}\leq\sigma_{1}\leq b_{2}$ for $i=1,2$.}
\end{lem}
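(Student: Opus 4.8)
The plan is to derive the estimate from three ingredients: the absolute convergence of the product defining $Z_{S,\chi}(s,\sigma)$ in a right half–plane, the functional equation of Lemma~\ref{IV.1} in the sharp form of Theorem~\ref{IV.3}, and the Phragmén--Lindelöf principle for a bounded vertical strip. I read the assertion as the upper bound $\log\abs{Z_S^i(\sigma_1+\II t)}\le C\abs{t}^{n-1}$ for $\abs{t}$ large, uniformly on $b_1\le\sigma_1\le b_2$ (the statement is symmetric in $i=1,2$ because $s\mapsto -s$ carries the zeros of one of $Z_S^1,Z_S^2$ to those of the other, up to the transfer factor). Fix a large $c$ with $[b_1,b_2]\subset(-c,c)$ and $c>\rho$. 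On $\RE s=c$ the product \eqref{3.0} converges absolutely and $\abs{Z_{S,\chi}(s,\sigma)-1}\le\tfrac12$, so $\log\abs{Z_{S,\chi}(s,\sigma)}=O(1)$ uniformly in $t$. Using \eqref{3.3} and the explicit shape of $Z_S^1,Z_S^2$ from \cite{AG} --- obtained from the determinant representation $Z_{S,\chi}(s,\sigma)=\det\br{A_{Y,\chi}(\gamma,\sigma)^2+s^2}\det\br{A_d(\gamma,\sigma)+s}^{-a}\exp(R(s))$ recalled in the proof of Lemma~\ref{IV.1} (with $a=\tfrac{2\dim(\chi)\chi(Y)}{\chi(X_d)}$ and $R$ a polynomial of degree $n$), so that, according to the parity of $\tfrac n2$, one of $Z_S^1,Z_S^2$ is a power of $\det\br{A_d(\gamma,\sigma)+s}$ while the other absorbs $\det\br{A_{Y,\chi}(\gamma,\sigma)^2+s^2}$ --- the bound on $\RE s=c$ reduces to estimating there the factor $\det\br{A_d(\gamma,\sigma)+s}^{\abs{a}}$, whose zeros form the progression $-T(\mathbb N-\epsilon_\sigma)$ with multiplicities $\abs{a}\,m_d(s,\gamma,\sigma)=\abs{a}\,P_\sigma(s)$ (Theorem~A\,(3), Lemma~\ref{IV.2}); by the Stirling asymptotics of the associated Barnes multiple Gamma function, its logarithm on a vertical line is $O(\abs{t}^{n-1})$, the formally larger $\abs{t}^n\log\abs{t}$ and $\abs{t}^n$ contributions being either absent from the modulus or cancelled by $\exp(R(s))$ --- $R$ being built in \cite{Bunke} for exactly this purpose via $\sigma$–admissibility. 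Hence $\log\abs{Z_S^i(c+\II t)}=O(\abs{t}^{n-1})$ for $i=1,2$.

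On the reflected line $\RE s=-c$ I would use Lemma~\ref{IV.1}: by the computation in the proof of Theorem~\ref{IV.3}, $Z_{S,\chi}(s,\sigma)=H(s)\,Z_{S,\chi}(-s,\sigma)$ with $\log\abs{H(\sigma_1+\II t)}=O(\abs{t}^{n-1})$ --- the term of \eqref{44.1} of degree $n$ in $t$ carries the factor $\II$, hence is purely imaginary on $\RE s=\sigma_1$ and drops out of the modulus, leaving only the real part $g$ of \eqref{44.4}, of degree $n-1$ in $\abs{t}$. Therefore $\log\abs{Z_{S,\chi}(-c+\II t,\sigma)}=\log\abs{Z_{S,\chi}(c-\II t,\sigma)}+O(\abs{t}^{n-1})=O(\abs{t}^{n-1})$, and the argument of the previous paragraph, with the two determinant factors interchanged, gives $\log\abs{Z_S^i(-c+\II t)}=O(\abs{t}^{n-1})$, $i=1,2$.

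It remains to interpolate across $-c\le\RE s\le c$. Each $Z_S^i$ is entire of order at most $n$, hence grows at most like $\exp(\abs{\IM s}^{n+1})$ in this strip, far below the double–exponential rate admitted in the Phragmén--Lindelöf theorem for a strip. As $n$ is even, $\RE((-\II s)^{n-1})=\abs{t}^{n-1}+O(\abs{t}^{n-2})$ for $s=\sigma_1+\II t$ with $t>0$; applying Phragmén--Lindelöf to $h(s)=Z_S^i(s)\exp(-C_0(-\II s)^{n-1})$ on the half–strip $\set{-c\le\RE s\le c,\ \IM s\ge t_0}$, with $C_0$ large, the two preceding paragraphs make $\abs{h}$ bounded on the boundary, hence throughout, so $\log\abs{Z_S^i(\sigma_1+\II t)}\le C\abs{t}^{n-1}+O(\abs{t}^{n-2})$ for $\sigma_1\in[b_1,b_2]$, $t\ge t_0$. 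The case $t\le -t_0$ is symmetric (use $(\II s)^{n-1}$) and $\abs{t}\le t_0$ is trivial, which completes the proof.

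The step I expect to be the genuine obstacle is the control of $Z_S^1$ and $Z_S^2$ individually on the reference lines: the Euler product bounds only the quotient $Z_{S,\chi}$, so one must isolate the ``trivial divisor'' factor --- the Barnes–type entire function attached to the progression $T(\mathbb N-\epsilon_\sigma)$ with its polynomial multiplicities $P_\sigma$ --- and show that along vertical lines its modulus is $e^{O(\abs{t}^{n-1})}$, the apparently dominant $\abs{t}^n\log\abs{t}$ and $\abs{t}^n$ contributions cancelling precisely because $\gamma$ is $\sigma$–admissible, i.e. because the compact–dual spectral data match those of $Y$ through the heat coefficients. This is the $n$–dimensional counterpart of the balancing between trivial and spectral zeros responsible for the sharp $O(t)$ error in the classical ($n=2$) case recalled in the Introduction.
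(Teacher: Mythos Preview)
Your overall route---bound $Z_{S,\chi}$ on $\RE(s)=c>\rho$ via absolute convergence of the Euler product, transfer to $\RE(s)=-c$ via Theorem~4.3, and interpolate by Phragm\'en--Lindel\"of using the a~priori order-$n$ growth from \eqref{3.3}---is exactly the paper's. The paper's proof is four lines long, citing precisely those three inputs and nothing else; in particular it does not spell out how a bound on the \emph{quotient} $Z_{S,\chi}=Z_S^1/Z_S^2$ yields a bound on each $Z_S^i$ separately, which is the step you correctly single out as the real work.

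Your proposed fix for that step is, however, not quite right. In the Bunke--Olbrich representation both $\det\br{A_{Y,\chi}(\gamma,\sigma)^2+s^2}$ and $\det\br{A_d(\gamma,\sigma)+s}$ are \emph{super}determinants on the $\mathbb{Z}_2$--graded bundle $V(\gamma)^+\oplus V(\gamma)^-$: the multiplicities $m_\chi$ and $m_d$ are supertraces and take either sign, so each determinant is already a ratio of ordinary regularized determinants. Hence it is not true that one of $Z_S^1,Z_S^2$ is simply a power of $\det(A_d+s)$ while the other ``absorbs'' $\det(A_{Y,\chi}^2+s^2)$; the spectral divisor of type~(1) in Theorem~A generically splits across both $Z_S^1$ and $Z_S^2$. (Relatedly, your opening remark that $s\mapsto-s$ interchanges the zero sets of $Z_S^1$ and $Z_S^2$ is false: the type-(1) singularities sit at $\pm\II s$ and are individually invariant under $s\mapsto-s$; only the type-(3) divisor on the negative real axis breaks the symmetry.) What the argument actually requires---and what the paper tacitly imports from \cite{AG}---is the explicit Hadamard/Weierstrass construction of $Z_S^1,Z_S^2$ given there, which pins down the exponential polynomial and lets one estimate each factor on the reference lines; your Barnes/Stirling outline for the canonical product over the trivial (type-(3)) lattice is the right idea for that piece, but it must be combined with the spectral-side product rather than substituted for it.
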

\begin{proof}
Let $c>\max\set{\rho,\abs{b_{1}},\abs{b_{2}}}$. It is enough to prove the assertion for a wider strip $-c\leq\sigma_{1}\leq c$.

By (\ref{3.3}), $Z_{S}^{1}\br{s}$ and $Z_{S}^{2}\br{s}$ are of order at most $n$. Furthermore, the infinite product (\ref{3.0}) defining $Z_{S,\chi}\br{s,\sigma}$ converges for $\RE{\br{s}}>\rho$ (see, \cite[pp.~98-99]{Bunke}). Hence, the lemma is implied by the Phragm\'{e}n-Lind\"{o}lef theorem and Theorem \ref{IV.3}.3.
\end{proof}\

\begin{teorem}{4.5.}\label{IV.5}
\textit{If $\gamma$ is $\sigma\,-$ admissible, then}
\newline
\[N\br{t}=\frac{K}{2\pi}\sum\limits_{k=0}^{\frac{n}{2}-1}\br{-1}^{\frac{n}{2}-k}p_{n-2k-1}\frac{t^{n-2k}}{n-2k}+\frac{1}{\pi}S\br{t}+O\br{1},\]
\newline
\textit{where $N\br{t}$ denotes the number of singularities of $Z_{S,\chi}\br{s,\sigma}$ on the interval $ix$, $0<x<t$ and $S\br{t}$ is the variation of the argument of $Z_{S,\chi}\br{s,\sigma}$ along $C$. Here, $C$ denotes the portion of $\partial R$ consisting of the vertical segment from $a$ to $a+\II t$ plus horizontal segment from $a+\II t$ to $\II t$, where $R$ is the rectangle defined by the inequalities $-a\leq\RE{\br{s}}\leq a$, $-t\leq\IM{\br{s}}\leq t$ for some $a>\rho$.}
\end{teorem}
\begin{proof}
We follow \cite{Randol2}.

Define $\xi\br{s}=\br{Z_{S,\chi}\br{s,\sigma}}^{2}e^{\phi\br{s}}$, where

\[\phi\br{s}=K\int\limits_{0}^{s}P_{\sigma}\br{w}\left\{%
\begin{array}{ll}
    \tan\br{\frac{\pi w}{T}}, & \hbox{$\epsilon_{\sigma}=\frac{1}{2}$} \\
    -\cot\br{\frac{\pi w}{T}}, & \hbox{\hbox{$\epsilon_{\sigma}=0$}} \\
\end{array}%
\right\}dw.\]
\newline
Here, as earlier, we specify $\phi\br{s}$ in the open upper and lower half-planes to be the value obtained by carrying out the integration along the line segment joining the origin to $s$. Furthermore, if $\epsilon_{\sigma}=\frac{1}{2}$ resp. $\epsilon_{\sigma}=0$ and $s$ is on the real line, $s\neq\pm\frac{T}{2}, \pm\frac{3T}{2}, \pm\frac{5T}{2},...$ resp. $s\neq\ 0, \pm T, \pm 2T,...$, we define $\phi\br{s}$ by the requirement of continuity as $s$ is approached from the upper half-plane.

By Lemma \ref{IV.1}.1,
\[Z_{S,\chi}\br{-s,\sigma}=e^{\phi\br{s}}Z_{S,\chi}\br{s,\sigma}.\]
Hence,
\[\xi\br{-s}=\br{Z_{S,\chi}\br{-s,\sigma}}^{2}e^{\phi\br{-s}}=\]

\[\br{Z_{S,\chi}\br{s,\sigma}}^{2}e^{2\phi\br{s}}e^{-\phi\br{s}}=\br{Z_{S,\chi}\br{s,\sigma}}^{2}e^{\phi\br{s}}=\xi\br{s}.\]
\newline
As usual, $\xi\br{s}$ is real on the real axis and so $\overline{\xi\br{s}}=\xi\br{\overline{s}}$.

Assume that $t$ is selected so that no singularity of $Z_{S,\chi}\br{s,\sigma}$ occurs on the boundary of $R$. We have,

\[N\br{t}=\frac{1}{4}\cdot\frac{1}{2\pi\II}\int\limits_{\partial R}\frac{\xi^{'}\br{s}}{\xi\br{s}}ds-\frac{1}{2}N_{0}=\frac{1}{4}\cdot\frac{1}{2\pi}\IM{\br{\int\limits_{\partial R}\frac{\xi^{'}\br{s}}{\xi\br{s}}ds}}-\frac{1}{2}N_{0},\]
\newline
where $N_{0}=O\br{1}$ is the number of singularities of $Z_{S,\chi}\br{s,\sigma}$ on the real line.

From the functional equation for $\xi\br{s}$ and the fact that $\overline{\xi\br{s}}=\xi\br{\overline{s}}$, it follows in the classical way that

\[N\br{t}=\frac{1}{2\pi}\IM{\br{\int\limits_{C}\frac{\xi^{'}\br{s}}{\xi\br{s}}ds}}+O\br{1}.\]
By (\ref{44.4}),
\[\phi\br{\sigma_{1}+\II t}=\]

\[\sum\limits_{k=0}^{\frac{n}{2}-1}p_{n-2k-1}\frac{t}{\abs{t}}\frac{K\II}{n-2k}\sum\limits_{l=0}^{\frac{n}{2}-k}\binom{n-2k}{2l}(-1)^{l}\sigma_{1}^{n-2k-2l}t^{2l}+\]
\[\sum\limits_{k=0}^{\frac{n}{2}-1}p_{n-2k-1}\frac{K}{n-2k}\sum\limits_{l=1}^{\frac{n}{2}-k}\binom{n-2k}{2l-1}(-1)^{l}\sigma_{1}^{n-2k-2l+1}\abs{t}^{2l-1}+O\br{1}.\]
\newline
Now,
\[\frac{\xi^{'}\br{s}}{\xi\br{s}}=\phi^{'}\br{s}+2\frac{Z^{'}_{S,\chi}\br{s,\sigma}}{Z_{S,\chi}\br{s,\sigma}},\]
so
\[N\br{t}=\frac{1}{2\pi}\IM{\br{\int\limits_{C}\phi^{'}\br{s}ds}}+\frac{1}{\pi}\IM{\br{\int\limits_{C}\frac{Z^{'}_{S,\chi}\br{s,\sigma}}{Z_{S,\chi}\br{s,\sigma}}ds}}+O\br{1}=\]

\[\frac{1}{2\pi}\IM{\br{\phi\br{\II t}-\phi\br{a}}}+\frac{1}{\pi}S\br{t}+O\br{1}=\]

\[\frac{1}{2\pi}\IM{\phi\br{\II t}}+\frac{1}{\pi}S\br{t}-\frac{1}{2\pi}\phi\br{a}+O\br{1}=\]

\[\frac{K}{2\pi}\sum\limits_{k=0}^{\frac{n}{2}-1}\br{-1}^{\frac{n}{2}-k}p_{n-2k-1}\frac{t^{n-2k}}{n-2k}+\frac{1}{\pi}S\br{t}+O\br{1}.\]
\newline
This completes the proof.
\end{proof}\

\begin{lem}{4.6.}\label{IV.6}
\textit{If $\gamma$ is $\sigma\,-$ admissible, then}
\newline
\[S\br{t}=O\br{t^{n-1}}.\]
\end{lem}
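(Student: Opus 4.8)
The plan is to bound $S(t)$, the variation of the argument of $Z_{S,\chi}(s,\sigma)$ along the contour $C$, by exploiting a Jensen-type (or Backlund-type) argument exactly as in the classical treatment of Selberg's zeta function, but with the order-two estimate replaced by the order-$n$ estimate coming from Lemma \ref{IV.4}. First I would recall that $S(t)$ decomposes into the change of argument along the vertical segment from $a$ to $a+\II t$ and along the horizontal segment from $a+\II t$ to $\II t$. On the vertical segment $\RE(s)=a>\rho$ the infinite product \eqref{3.0} converges absolutely, so $Z_{S,\chi}(s,\sigma)$ stays in a fixed half-plane (its real part is bounded away from zero, or at least the logarithm is bounded), and the contribution of this segment to $S(t)$ is $O(1)$. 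Thus the whole problem reduces to estimating the variation of $\arg Z_{S,\chi}(s,\sigma)$ along the horizontal segment $\IM(s)=t$, $0\le\RE(s)\le a$.

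For the horizontal segment I would use the standard device: the change of $\arg Z_{S,\chi}(s,\sigma)$ along a horizontal line is controlled by the number of zeros of $\RE Z_{S,\chi}(s,\sigma)$ on that segment (equivalently, by Jensen's inequality applied to the function $\sigma_1\mapsto Z_{S,\chi}(\sigma_1+\II t,\sigma)$), and this number is in turn bounded by $\log$ of the maximum modulus of $Z_{S,\chi}$ on a slightly larger disc, via Jensen's formula. Concretely, working with the numerator and denominator separately through the factorization \eqref{3.3}, $Z_{S,\chi}(s,\sigma)=Z_S^1(s)/Z_S^2(s)$, it suffices to bound the argument variation of each $Z_S^i(\sigma_1+\II t)$. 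Using Lemma \ref{IV.4}, $|Z_S^i(\sigma_1+\II t)|=e^{O(|t|^{n-1})}$ uniformly on the strip $-a\le\sigma_1\le a$, and since $Z_S^i$ is entire of order at most $n$ one also has a lower bound of the shape $|Z_S^i(s)|\ge e^{-O(|s|^{n})}$ away from zeros; feeding these into Jensen's formula on a disc of radius $O(1)$ centered on the segment shows that $Z_S^i$ has at most $O(t^{n-1})$ zeros in that disc, and the Backlund argument then gives that $\arg Z_S^i$ changes by at most $O(t^{n-1})$ along the horizontal segment.

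Putting the two segments together yields $S(t)=O(t^{n-1})$. The main obstacle I anticipate is producing the clean $O(|t|^{n-1})$ count of zeros in the horizontal strip: the maximum-modulus bound from Lemma \ref{IV.4} is exactly of the right order $|t|^{n-1}$, but one must make sure the disc used in Jensen's formula can be taken of bounded radius (so that the exponent stays $|t|^{n-1}$ rather than jumping to $|t|^{n}$), and one must handle the possibility that $\RE(s)=a$ or the segment endpoints fall near zeros of $Z_S^i$ by perturbing $t$ slightly, as is already allowed in the statement of Theorem \ref{IV.5}. The rest — the reduction to the horizontal segment, the $O(1)$ bound on the vertical segment, and the translation from a zero count to an argument bound — is routine and follows \cite{Randol2} verbatim with $2$ replaced by $n$.
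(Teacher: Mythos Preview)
Your proposal is correct and follows essentially the same route as the paper: reduce $S(t)$ to the horizontal segment (the vertical part being $O(1)$ by absolute convergence for $\RE(s)>\rho$), then control the argument variation there via a Jensen/Backlund estimate on a disc of bounded radius, using the factorization \eqref{3.3} and the bound $\abs{Z_S^i(\sigma_1+\II t)}=e^{O(\abs{t}^{n-1})}$ from Lemma~4.4. The paper's proof is terser (it writes the Jensen integral directly over $\partial S$ with $S$ the disc of radius $a+\tfrac14$ centered at $a$, citing ``custom''), but the substance is identical.
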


\begin{proof}
First, we extend the definition of $S\br{t}$ to those values of $t$ for which $\II t$ is a pole or zero of $Z_{S,\chi}\br{s,\sigma}$ by defining it to be

\[\lim_{\varepsilon\rightarrow 0}\frac{1}{2}\br{S\br{t+\varepsilon}+S\br{t-\varepsilon}}.\]
Then, we have
\[S\br{t}=h\br{t}+O\br{1},\]
\newline
where $h\br{t}$ is the variation of the argument of $Z_{S,\chi}\br{s,\sigma}$ along the segment from $a+\II t$ to $\II t$.

Proceeding in accordance with a custom, one easily concludes that

\begin{equation}\label{44.5}
h\br{t}=O\br{\int\limits_{\partial S}\log\abs{Z_{S,\chi}\br{w+\II t,\sigma}}+\log\abs{Z_{S,\chi}\br{w-\II t,\sigma}}dw}=
\end{equation}
\[O\br{\sum\limits_{i=1,2}\int\limits_{\partial S}\log\abs{Z^{i}_{S}\br{w+\II t}}dw+\int\limits_{\partial S}\log\abs{Z^{i}_{S}\br{w-\II t}}dw},\]
\newline
where $S$ is the closed disc, centered at $a$, of radius $a+\frac{1}{4}$.

Now, the assertion follows from Lemma \ref{IV.4}.4 and (\ref{44.5}).
\end{proof}\

\begin{cor}{4.7.}\label{IV.7}
\textit{If $\gamma$ is $\sigma\,-$ admissible, then}

\[N\br{t}=\frac{\dim\br{\chi}\vol{\br{Y}}}{nT\vol{\br{X_{d}}}}t^{n}+O\br{t^{n-1}}.\]
\end{cor}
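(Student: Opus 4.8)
The plan is to feed Theorem \ref{IV.5}.5 into Lemma \ref{IV.6}.6 and then isolate the single dominant term in the finite sum. First I would substitute $S(t)=O(t^{n-1})$ from Lemma \ref{IV.6}.6 into the formula of Theorem \ref{IV.5}.5, absorbing $\frac{1}{\pi}S(t)$ and the $O(1)$ term into a single $O(t^{n-1})$ error, so that
\[N(t)=\frac{K}{2\pi}\sum\limits_{k=0}^{\frac{n}{2}-1}(-1)^{\frac{n}{2}-k}p_{n-2k-1}\frac{t^{n-2k}}{n-2k}+O\br{t^{n-1}}.\]
Next I would observe that in this sum the exponent $n-2k$ is maximal precisely at $k=0$, giving the term $(-1)^{\frac{n}{2}}p_{n-1}\frac{t^{n}}{n}$, while every term with $k\geq 1$ is $O(t^{n-2})=O(t^{n-1})$ and can be pushed into the error. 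By Lemma \ref{IV.2}.2 we have $p_{n-1}=1$ (monicity of $P_\sigma$), so the main term is simply $\frac{K}{2\pi}(-1)^{\frac{n}{2}}\frac{t^{n}}{n}$.

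It then remains to identify the constant $\frac{K}{2\pi n}(-1)^{\frac{n}{2}}$ with $\frac{\dim(\chi)\vol(Y)}{nT\vol(X_d)}$. Here I would unwind the definition $K=\frac{2\pi\dim(\chi)\chi(Y)}{\chi(X_d)T}$ from Lemma \ref{IV.1}.1, which turns the constant into $\frac{\dim(\chi)\chi(Y)}{nT\chi(X_d)}(-1)^{\frac{n}{2}}$, and then invoke the relation (\ref{1111}), namely $\frac{\chi(Y)}{\chi(X_d)}=(-1)^{\frac{n}{2}}\frac{\vol(Y)}{\vol(X_d)}$. Multiplying by the remaining factor $(-1)^{\frac{n}{2}}$ and using that $n$ is even (so $(-1)^{n}=1$) collapses the sign and yields exactly $\frac{\dim(\chi)\vol(Y)}{nT\vol(X_d)}$, completing the proof.

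Honestly there is no serious obstacle here: the corollary is a bookkeeping consequence of the two preceding results. The only points requiring care are (i) making sure that all subleading polynomial terms and $S(t)$ really are $O(t^{n-1})$ — which is immediate once Lemma \ref{IV.6}.6 is in hand — and (ii) tracking the two independent factors of $(-1)^{n/2}$ (one from Theorem \ref{IV.5}.5, one from the Euler-characteristic ratio (\ref{1111})) so that they cancel rather than reinforce. Since $n$ is even throughout, this cancellation is automatic, and the stated clean formula falls out.
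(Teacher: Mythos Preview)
Your proposal is correct and follows exactly the route the paper intends: the paper's own proof is the single sentence ``An immediate consequence of (\ref{1111}), Theorem \ref{IV.5}.5 and Lemma \ref{IV.6}.6,'' and you have simply written out the bookkeeping --- isolating the $k=0$ term, using $p_{n-1}=1$ from Lemma \ref{IV.2}.2, unwinding $K$, and cancelling the two factors of $(-1)^{n/2}$ via (\ref{1111}) --- that the paper leaves implicit.
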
\

\begin{proof}
An immediate consequence of (\ref{1111}), Theorem \ref{IV.5}.5 and Lemma \ref{IV.6}.6.
\end{proof}\

\begin{cor}{4.8.}\label{IV.8}
\textit{Let $-\rho\leq a\leq b\leq\rho$. If $\gamma$ is $\sigma\,-$ admissible, then there exists a constant $C$ such that}

\[N_{R}\br{t}=Ct^{n}+O\br{t^{n-1}},\]
\newline
\textit{where $N_{R}\br{t}$ denotes the number of singularities of $Z_{R,\chi}\br{s,\sigma}$ in the rectangle $a\leq\RE{\br{s}}\leq b$, $0<\IM{\br{s}}<t$.}
\end{cor}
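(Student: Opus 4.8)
The plan is to reduce the statement to the product formula (\ref{3.1}) together with Corollary \ref{IV.7}.7. Since each $I_{p}$ is finite and the product in (\ref{3.1}) is finite, $Z_{R,\chi}\br{s,\sigma}$ is a finite product of factors $Z_{S,\chi}\br{s+\rho-\lambda,\tau\otimes\sigma}^{\br{-1}^{p}}$, $\br{\tau,\lambda}\in I_{p}$, $0\leq p\leq n-1$; consequently every singularity of $Z_{R,\chi}\br{s,\sigma}$ lies in the union of the singular sets of these factors, and the order of $Z_{R,\chi}\br{s,\sigma}$ at a point is the corresponding signed combination of the orders of the Selberg factors. Decomposing the (possibly reducible) $M$-representation $\tau\otimes\sigma$ into irreducibles and using the multiplicativity of $Z_{S,\chi}$ in its second variable, together with the existence of an admissible $\gamma$ for every irreducible $M$-representation (valid because $n$ is even), Theorem A and Corollary \ref{IV.7}.7 apply to every factor $Z_{S,\chi}\br{\cdot,\tau\otimes\sigma}$.

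Next I would locate the singularities that meet the rectangle. The shift $\rho-\lambda$ is real and, as noted right after Theorem A, lies in $\SqBr{-\rho,\rho}$; hence by Theorem A the non-real singularities of $Z_{S,\chi}\br{s+\rho-\lambda,\tau\otimes\sigma}$ are precisely the points $s=\lambda-\rho\pm\II s'$ with $s'\neq 0$ an eigenvalue of the corresponding operator $A_{Y,\chi}$, while the singularities of types (2) and (3) of Theorem A are real. Thus every singularity of $Z_{R,\chi}\br{s,\sigma}$ with $0<\IM\br{s}<t$ lies on one of the finitely many vertical lines $\RE\br{s}=\lambda-\rho\in\SqBr{-\rho,\rho}$ determined by the pairs in the sets $I_{p}$, and only those with $\lambda-\rho\in\SqBr{a,b}$ meet the rectangle. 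On such a line, the singularities of $Z_{S,\chi}\br{s+\rho-\lambda,\tau\otimes\sigma}$ on the segment $\lambda-\rho+\II x$, $0<x<t$, are in order-preserving bijection with the singularities of $Z_{S,\chi}\br{\cdot,\tau\otimes\sigma}$ on $\II x$, $0<x<t$; so, applying Corollary \ref{IV.7}.7 to the irreducible constituents of $\tau\otimes\sigma$ and summing, their number equals $c_{\tau\otimes\sigma}t^{n}+O\br{t^{n-1}}$ with $c_{\tau\otimes\sigma}>0$.

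Summing over the finitely many relevant lines and the finitely many factors attached to each, with the signs $\br{-1}^{p}$ distinguishing zeros from poles, then yields $N_{R}\br{t}=Ct^{n}+O\br{t^{n-1}}$ for a suitable constant $C$: the real (lattice-type) singularities of Theorem A have $\IM\br{s}=0$ and never enter $0<\IM\br{s}<t$, and singularities with $\IM\br{s}=t$, or factors with $\lambda-\rho\in\set{a,b}$, affect only a fixed finite set of contributions or cost $O\br{1}$. All of the analytic substance is already packaged in Corollary \ref{IV.7}.7, so the step needing care is the bookkeeping — in particular, checking that the finite combination of the factor-counts, including possible coincidences of zeros from one factor with poles from another along the same vertical line, still has a leading term of the shape $Ct^{n}$ (it does, since each factor-count has leading term a fixed positive multiple of $t^{n}$, depending on the representation only through the number of its irreducible constituents). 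I expect that bookkeeping, rather than any hard estimate, to be the main obstacle.
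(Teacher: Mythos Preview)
Your proposal is correct and follows exactly the route the paper takes: the paper's entire proof reads ``Trivial consequence of the formula (\ref{3.1}) and Corollary \ref{IV.7}.7,'' and you have simply written out the bookkeeping that this sentence suppresses. One minor remark: your final worry about cancellations among factors is unnecessary here, since the statement only asserts the existence of \emph{some} constant $C$ (possibly zero), so any finite signed sum of the leading terms from Corollary~\ref{IV.7}.7 already gives the claim.
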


\begin{proof}
Trivial consequence of the formula (\ref{3.1}) and Corollary \ref{IV.7}.7.
\end{proof}\

\begin{remm}{4.9.}\label{IV.9}
Precise estimates of the number of singularities of the zeta functions represent an important tool which plays the key role in achieving more refined error terms in the prime geodesic theorem (see, \cite{Randol1}, \cite{AG0}, \cite{Park}). Such counting functions may also be used elsewhere.
\end{remm}\
\newline
\begin{rem}{}
We thank Professor Martin Olbrich for his valuable comments. 
\end{rem}

\end{document}